\renewcommand{\section}{%
\@startsection{section}{1}{\z@}{-3.5ex \@plus -1ex \@minus -.2ex}%
   {2.3ex \@plus.2ex}{\normalfont\normalsize\bfseries}}%
\renewcommand{\subsection}{%
\@startsection{subsection}{2}{\z@}{-3.25ex\@plus -1ex \@minus -.2ex}%
   {-1.5ex \@plus.2ex}{\normalfont\normalsize\bfseries}}%
\renewcommand{\@seccntformat}[1]{\csname the#1\endcsname.\quad}
\newtheoremstyle{kjmthm}%
{1em}{1em}{\it}{1em}{\sc}{.}{1em}{\thmname{#1}\ \thmnumber{#2}\thmnote{(#3)}}%
\newtheoremstyle{kjmdef}%
{1em}{1em}{\rm}{1em}{\sc}{.}{1em}{\thmname{#1}\ \thmnumber{#2}\thmnote{(#3)}}%
\renewenvironment{abstract}
{\begin{center}{\bf Abstract}\end{center}\par}{}
\date{}
\newcommand{\kjmfirstpg}{1}%
\newcommand{\headtitle}{Groups having Wirtinger presentations}
\newcommand{\fullauthor}
{Toshiyuki {\sc Akita} and Sota {\sc Takase}}
\newcommand{\headauhtor}
{Toshiyuki {\sc Akita} and Sota {\sc Takase}}%
\theoremstyle{kjmthm}
\newtheorem{theorem}{Theorem}
\newtheorem{proposition}[theorem]{Proposition}
\newtheorem{corollary}[theorem]{Corollary}
\theoremstyle{kjmdef}
\newtheorem{eg}[theorem]{Example}
\theoremstyle{remark}
\newcommand{\Ker}{\operatorname{Ker}}
\title[Groups having Wirtinger presentations]{Groups having Wirtinger presentations \\
and the second group homology}
\begin{document}
\maketitle

\begin{abstract}
Kuz\cprime min (1996) 
characterized groups having Wirtinger presentations
 in relation to their second group homology. 
 In this paper, we further refine the relation 
 between these groups and their second group homology.
\end{abstract}

\section{Introduction}

A \textit{Wirtinger presentation} is a group presentation 
$\langle X\mid R\rangle$
where all defining relations in $R$ have the following form:
  \[
    x_\beta^{-1}x_\alpha x_\beta=x_\gamma
    \quad (x_\alpha,x_\beta,x_\gamma\in X).
  \]
We call a group having a Wirtinger presentation a \textit{Wirtinger group}.
Wirtinger groups are also called (orientable) {C-groups} \cites{Kulikov,MR1307063,Kuz'min1996} or 
labelled oriented graph (LOG) groups \cites{Howie,MR3627395} 
in the literature.
A Wirtinger group is said to be \textit{irreducible}
if all the generators of its Wirtinger presentation 
are conjugate each other.

Wirtinger groups appear in various contexts of topology and algebra. 
Knot groups and link groups
are well-known examples of Wirtinger groups. 
More generally, for any smooth $n$-dimensional closed orientable manifold
smoothly embedded in $\mathbb{R}^{n+2}$, %
the fundamental group of its complement is known to be a Wirtinger group
\cites{Simon,MR1307063}.
Further examples of Wirtinger groups include
braid groups, pure braid groups, Artin groups,
 Thompson's group $F$,  and associated groups of quandles
\cite{MR3729413}.

After pioneering studies by Yajima \cite{Yajima} and Simon \cite{Simon},
Kuz\cprime min obtained necessary and sufficient conditions for a group 
to be a Wirtinger group \cite{Kuz'min1996}.
Before quoting Kuz\cprime min's results,
we should notice that the abelianization $G_{\mathrm{ab}}$ of a Wirtinger group $G$
is a free abelian group.
In particular, if $G$ is an irreducible Wirtinger group,
then $G_{\mathrm{ab}}$ is an infinite cyclic group.
\begin{theorem}[Kuz\cprime min \cite{Kuz'min1996}*{Theorem 1}]
\label{theorem_Kuz'min1996}
  A group $G$ is a Wirtinger group if and only if
  $G_{\mathrm{ab}}$ is free abelian and
  there exists a subset $B\subset G$ satisfying the following 
  three conditions:
 \begin{enumerate}
 \item $G$ coincides with the normal closure of $B$.
 \item The classes  in $G_{\mathrm{ab}}$ of elements in
 $B$ are linearly independent.
\item 
 The homomorphism
  \[
    \zeta \colon \bigoplus_{a \in B}C_G(a) \to H_2G, \quad
    (g_a)_{a\in B} \mapsto \sum_{a \in B}[a|g_a]-[g_a|a]
  \]
  is surjective, where $C_G(a)$ denotes the centralizer of $a$ in $G$, 
  $H_{2}G$ is the second integral homology group of $G$,
  $[a|g_a]-[g_a|a]$ is a normalized $2$-cycle of $G$
and  $\bigoplus$ is the external direct sum in the category of groups.
 \end{enumerate}  
\end{theorem}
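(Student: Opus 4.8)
The plan is to prove the two implications separately, in both directions passing between a Wirtinger presentation and $H_2G$ through Hopf's formula $H_2G\cong (R\cap[F,F])/[F,R]$ for a presentation $G=F/R$ with $F$ free, together with the normalized bar resolution in which $\partial[g|h]=[h]-[gh]+[g]$, so that $[a|g]-[g|a]$ is a cycle exactly when $g\in C_G(a)$. For the forward implication, I would start from a Wirtinger presentation $\langle X\mid R\rangle$ and record the associated labelled oriented graph $\Gamma$: its vertices are $X$, and each relator $x_\beta^{-1}x_\alpha x_\beta x_\gamma^{-1}$ gives an edge joining $x_\alpha$ and $x_\gamma$ labelled by $x_\beta$. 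Abelianizing, every relation becomes $x_\alpha=x_\gamma$, so $G_{\mathrm{ab}}\cong\mathbb{Z}[\pi_0\Gamma]$ is free abelian on the set of connected components. Taking $B$ to be (the images in $G$ of) one vertex per component makes conditions (1) and (2) immediate: within a component the vertices are joined by edges, hence conjugate in $G$, giving (1), and $B$ maps to a basis of $G_{\mathrm{ab}}$, giving (2).

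The crux is condition (3). Passing to the presentation $2$-complex $K$ one has a surjection $H_2K\twoheadrightarrow H_2G$, and since $K$ has no $3$-cells, $H_2K=\ker\partial_2$, where $\partial_2$ sends a relator to its abelianization $x_\alpha-x_\gamma$; thus $H_2K$ is exactly the cycle space $Z_1(\Gamma)$, spanned by the oriented closed paths in $\Gamma$. Given such a path $x^{(0)}\to x^{(1)}\to\cdots\to x^{(k)}=x^{(0)}$ with edge labels $b_1,\dots,b_k$, the successive conjugation relations compose to $g^{-1}x^{(0)}g=x^{(0)}$ with $g=b_1\cdots b_k$, so after conjugating $x^{(0)}$ into some $a\in B$ the element $g$ yields an element of $C_G(a)$. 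I would then check, via a diagonal (Fox-calculus) comparison chain map from the cellular chains of $K$ to the bar complex, that the image of this graph cycle in $H_2G$ equals $[a|g]-[g|a]$ up to a boundary, the intermediate prefix terms cancelling telescopically around the loop. Since such cycles span $H_2K$ and $H_2K\twoheadrightarrow H_2G$, the classes $[a|g]-[g|a]$ generate $H_2G$, i.e. $\zeta$ is surjective.

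For the converse, assume $G_{\mathrm{ab}}$ is free abelian and $B$ satisfies (1)--(3); note (1) and (2) together force $G_{\mathrm{ab}}$ to be free abelian on the classes of $B$. I would construct the tautological Wirtinger group $\hat G$ on symbols for the conjugates $w^{-1}aw$ ($a\in B, w\in G$), with Wirtinger relations encoding $\langle a,w\rangle^{\langle b,v\rangle}=\langle a, w v^{-1}bv\rangle$, and the evaluation $\langle a,w\rangle\mapsto w^{-1}aw$ giving a surjection $\pi\colon\hat G\to G$ (onto by (1)). As $\hat G$ is Wirtinger with the same base $B$, it is an isomorphism on $H_1$, and the five-term exact sequence of $1\to K\to\hat G\to G\to 1$ collapses to $H_2\hat G\xrightarrow{\pi_*}H_2G\to K/[\hat G,K]\to0$. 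The role of condition (3) is that $H_2G$ is spanned by the commutator cycles $[a|g]-[g|a]$; pulling these back through $\pi$, and applying the forward direction to $\hat G$, is what should force $\pi_*$ to be surjective and hence $K=[\hat G,K]$.

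The main obstacle is turning $K/[\hat G,K]=0$ into $K=1$: this step is not formal, since a nontrivial perfect kernel would survive it. Controlling $K$ — showing that commuting pairs in $G$ genuinely lift to commuting pairs in $\hat G$ so that $\pi_*$ is onto, and then that no residual kernel remains — is exactly where condition (3) must be used in an essential, non-bookkeeping way, presumably through a rewriting/normal-form analysis of $\hat G$ or an inductive argument along a lower-central filtration. The forward direction's chain-level identification of a graph cycle with $[a|g]-[g|a]$ is the other technical point, but I expect it to reduce to routine telescoping once the comparison chain map is fixed.
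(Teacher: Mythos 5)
The paper itself does not prove this statement: it is quoted verbatim from Kuz\cprime min \cite{Kuz'min1996}, so your proposal can only be compared with Kuz\cprime min's argument. Your forward direction follows that line and is sound in outline: $H_2$ of the presentation complex is the cycle space of the labelled oriented graph, a closed labelled path telescopes to a commuting pair, and conjugation-invariance of group homology moves $[x^{(0)}|g]-[g|x^{(0)}]$ to the corresponding class based at $a\in B$. The gap is in the converse, at exactly the step you flagged, and it is fatal for your particular construction: indexing the generators of $\hat G$ by formal pairs $\langle a,w\rangle\in B\times G$ produces the wrong group. Concretely, take $G=\mathbb{Z}^2=\langle a\rangle\oplus\langle b\rangle$, $B=\{a,b\}$, which satisfies (1)--(3) (for (3), $H_2G\cong\mathbb{Z}$ is generated by $[a|b]-[b|a]$ with $b\in C_G(a)$). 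Conjugating a generator by itself forces $\langle a,w\rangle=\langle a,w+a\rangle$, so the generators of your $\hat G$ reduce to $a_j:=\langle a,jb\rangle$ and $b_i:=\langle b,ia\rangle$, with remaining relations $[a_j,a_{j'}]=[b_i,b_{i'}]=1$, $b_i^{-1}a_jb_i=a_{j+1}$, $a_j^{-1}b_ia_j=b_{i+1}$. The integral Heisenberg group $\langle A,B\rangle$ with $z=[A,B]$ central satisfies all of these via $a_j\mapsto Az^{j}$, $b_i\mapsto Bz^{-i}$, compatibly with the projection to $\mathbb{Z}^2$. Hence $K=\Ker\pi$ surjects onto $\mathbb{Z}$ and is nontrivial: the statement you aim to prove about your $\hat G$ is false, and in particular the commuting pair $(a,b)$ of $G$ admits no commuting lift to $\hat G$.

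The missing idea is to index the generators by the \emph{actual} set of conjugates $Q=\{w^{-1}aw : a\in B,\ w\in G\}\subset G$, with relations $e_q^{-1}e_pe_q=e_{q^{-1}pq}$ for $p,q\in Q$ (still a Wirtinger presentation). This single change makes both of your obstacles evaporate. Since the relations say that conjugation by any $x\in\hat G$ sends $e_p$ to $e_{\pi(x)^{-1}p\pi(x)}$, one gets: (i) the kernel $K$ is \emph{central} (if $\pi(x)=1$ then $x$ commutes with every generator), so $K/[\hat G,K]=K$ and there is no perfect-kernel loophole; and (ii) for $g\in C_G(a)$, \emph{any} lift $\hat g$ of $g$ commutes with $e_a$, because $\hat g^{-1}e_a\hat g=e_{g^{-1}ag}=e_a$ on the nose---this is precisely where ``actual conjugates'' beats ``formal pairs.'' Then $[e_a|\hat g]-[\hat g|e_a]$ is a cycle of $\hat G$ mapping to $[a|g]-[g|a]$, so condition (3) gives surjectivity of $\pi_*\colon H_2\hat G\to H_2G$; conditions (1) and (2) identify the conjugation-orbits of $Q$ with $B$ and make $H_1\hat G\to H_1G$ an isomorphism of free abelian groups on $B$; and the five-term exact sequence of $1\to K\to\hat G\to G\to 1$ then forces $K=1$, so $G\cong\hat G$ is Wirtinger. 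This central-extension mechanism is exactly Kuz\cprime min's; without it, no rewriting or normal-form analysis will rescue the pair-indexed construction, as the example above shows.
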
%
In case that $G$ is an irreducible Wirtinger group,
a subset $B$ must be a singleton $B=\{a\}$,
and Theorem \ref{theorem_Kuz'min1996} implies
the following result:
\begin{corollary}[Kuz\cprime min \cite{Kuz'min1996}*{Corollary 4}]
\label{cor-kuzmin}
A group $G$ is an irreducible Wirtinger group if and only if
$G_{\mathrm{ab}}\cong\mathbb{Z}$ and there is an element $a\in G$ satisfying
the following condtions:
\begin{enumerate}
\item $G$ coincides with the normal closure of $a\in G$.
\item The homomorphism
  \[
    \zeta \colon C_G(a) \to H_2G, \quad
    g \mapsto [a|g]-[g|a]
  \]
  is surjective.
\end{enumerate}
\end{corollary}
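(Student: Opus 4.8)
The plan is to deduce Corollary~\ref{cor-kuzmin} from Theorem~\ref{theorem_Kuz'min1996} by specializing the subset $B$ to a singleton, the only extra ingredient being a dictionary between irreducibility and the rank of $G_{\mathrm{ab}}$. I would first record the following structural observation about any Wirtinger presentation $\langle X\mid R\rangle$ of a group $G$. Each defining relation $x_\beta^{-1}x_\alpha x_\beta=x_\gamma$ abelianizes to $x_\alpha=x_\gamma$, so $G_{\mathrm{ab}}$ is the free abelian group on the set of connected components of the graph with vertex set $X$ and an edge joining $x_\alpha$ to $x_\gamma$ for each relation; in particular $\operatorname{rank}G_{\mathrm{ab}}$ equals the number $c$ of such components. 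Moreover any two generators lying in the same component are conjugate in $G$, being iterated conjugates through the relations. Since conjugate elements have equal image in $G_{\mathrm{ab}}$, it follows that $G$ is irreducible if and only if $c=1$, i.e.\ if and only if $G_{\mathrm{ab}}\cong\mathbb{Z}$.

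For the forward direction, suppose $G$ is an irreducible Wirtinger group. Then $G_{\mathrm{ab}}\cong\mathbb{Z}$ by the remark preceding the corollary (equivalently, by the observation above). Applying Theorem~\ref{theorem_Kuz'min1996}, I obtain a subset $B\subset G$ satisfying its three conditions. A linearly independent subset of $\mathbb{Z}\cong G_{\mathrm{ab}}$ has at most one element, so $\lvert B\rvert\le 1$; and $B\neq\emptyset$, for otherwise condition~(1) would force $G$ to be trivial, contradicting $G_{\mathrm{ab}}\cong\mathbb{Z}$. Hence $B=\{a\}$ is a singleton, and conditions~(1) and~(3) of Theorem~\ref{theorem_Kuz'min1996} become literally conditions~(1) and~(2) of the corollary, the direct sum in~(3) collapsing to the single summand $C_G(a)$.

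Conversely, suppose $G_{\mathrm{ab}}\cong\mathbb{Z}$ and that some $a\in G$ satisfies conditions~(1) and~(2) of the corollary. I set $B=\{a\}$ and verify the hypotheses of Theorem~\ref{theorem_Kuz'min1996}. Condition~(1) of the corollary is condition~(1) of the theorem. Abelianizing condition~(1) shows that the class of $a$ generates $G_{\mathrm{ab}}\cong\mathbb{Z}$, hence is nonzero, and a single nonzero element of a torsion-free group is linearly independent, giving condition~(2). Finally condition~(2) of the corollary is exactly condition~(3) for the singleton $B$. Theorem~\ref{theorem_Kuz'min1996} then shows that $G$ is a Wirtinger group, and since $G_{\mathrm{ab}}\cong\mathbb{Z}$ the dictionary of the first paragraph upgrades this to irreducibility.

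The genuinely substantive step is that dictionary, and in particular the implication used in the reverse direction: that a Wirtinger group with $G_{\mathrm{ab}}\cong\mathbb{Z}$ is automatically irreducible. This rests on the exact computation of $G_{\mathrm{ab}}$ as the free abelian group on the conjugacy-components of the presentation graph, together with the fact that $c=\operatorname{rank}G_{\mathrm{ab}}$ is an invariant of $G$ independent of the chosen Wirtinger presentation. Once this is in place, the remainder is the purely formal specialization of Theorem~\ref{theorem_Kuz'min1996} to a one-element set $B$, for which the only arithmetic input is that a rank-one free abelian group admits no linearly independent pair.
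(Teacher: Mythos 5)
Your proposal is correct and takes essentially the same approach as the paper: the paper derives this corollary from Theorem~\ref{theorem_Kuz'min1996} via the one-sentence remark that in the irreducible case $B$ must be a singleton $\{a\}$, which is exactly your specialization argument. Your only genuine addition is the ``dictionary'' (a Wirtinger group is irreducible if and only if $G_{\mathrm{ab}}\cong\mathbb{Z}$, via the component count of the presentation graph), which correctly supplies the reverse-direction upgrade from ``Wirtinger'' to ``irreducible Wirtinger'' that the paper leaves implicit in its citation of Kuz\cprime min.
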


The results of Kuz\cprime min
unveil intrinsic relationships between Wirtinger groups
and the second group homology.
The purpose of this paper is to prove the following theorem
which refines such relationships further:
\begin{theorem}\label{theorem_main}
  Let $G$ be a Wirtinger group and let $B$ be a subset of $G$ 
  which satisfies the conditions in Theorem \ref{theorem_Kuz'min1996}.
  For each 
  $a\in B$, define a homomorphism $\varepsilon_a\colon G \to \mathbb{Z}$ by
  \[
    \varepsilon_a(b) = \begin{cases}
      1 & \text{if $b=a$,}\\
      0 & \text{otherwise}
    \end{cases}\notag
  \]
  for $b\in B$. Then the homomorphism
  \[
    \zeta \colon \bigoplus_{a\in B} C_G(a)\cap\Ker{\varepsilon_a} \to H_2G,
    \quad (g_a)_{a\in B} \mapsto 
    \sum_{a\in B}[a|g_a]-[g_a|a]
  \]
  is surjective.
\end{theorem}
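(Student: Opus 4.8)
The plan is to reduce the statement to Kuz\cprime min's surjectivity (condition (3) of Theorem \ref{theorem_Kuz'min1996}) by showing that, for each $a\in B$, the partial map $\phi_a\colon C_G(a)\to H_2G$, $g\mapsto [a|g]-[g|a]$, attains the same image on the subgroup $C_G(a)\cap\Ker\varepsilon_a$ as on all of $C_G(a)$. Since $\zeta$ is the sum $\zeta((g_a)_{a\in B})=\sum_{a\in B}\phi_a(g_a)$, its image is the sum of subgroups $\sum_{a\in B}\Img\phi_a$; so if each $\phi_a$ already attains its full image on the smaller subgroup, the restricted $\zeta$ stays surjective.

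First I would record that $\varepsilon_a$ is well defined. Conditions (1) and (2) of Theorem \ref{theorem_Kuz'min1996} say that the images of $B$ generate $G_{\mathrm{ab}}$ and are linearly independent, so they form a basis of the free abelian group $G_{\mathrm{ab}}$; thus $\varepsilon_a$ is the composite of $G\to G_{\mathrm{ab}}$ with the $a$-th coordinate projection, and in particular $\varepsilon_a(a)=1$.

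The key step is that $\phi_a$ is a group homomorphism. This is already implicit in Theorem \ref{theorem_Kuz'min1996}, since $\phi_a$ is the restriction of the homomorphism $\zeta$ to the summand $C_G(a)$ (the tuples supported at $a$); alternatively one checks it directly in the normalized bar complex, where for $g,h\in C_G(a)$ the difference $([a|gh]-[gh|a])-([a|g]-[g|a])-([a|h]-[h|a])$ equals the boundary
\[
\partial\bigl([a|g|h]+[g|h|a]-[g|a|h]\bigr),
\]
the commuting relations $ag=ga$ and $ah=ha$ being exactly what is needed to cancel the spurious terms $[ag|h]$, $[g|ha]$ and $[g|h]$. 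Granting this, $\phi_a(a)=[a|a]-[a|a]=0$.

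With these in hand the reduction is immediate. Given $g\in C_G(a)$, put $n=\varepsilon_a(g)$ and $g'=ga^{-n}$. Then $g'\in C_G(a)$ (a subgroup containing $a$) and $\varepsilon_a(g')=n-n\,\varepsilon_a(a)=0$, so $g'\in C_G(a)\cap\Ker\varepsilon_a$; moreover $\phi_a(g)=\phi_a(g')+n\,\phi_a(a)=\phi_a(g')$. Hence $\Img\bigl(\phi_a|_{C_G(a)\cap\Ker\varepsilon_a}\bigr)=\Img\bigl(\phi_a|_{C_G(a)}\bigr)$ for every $a\in B$, and summing over $B$ shows that the image of the restricted $\zeta$ coincides with the image of the original $\zeta$, which is all of $H_2G$ by Theorem \ref{theorem_Kuz'min1996}. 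I expect the only genuine content to be the homomorphism property of $\phi_a$: if one prefers not to invoke that $\zeta$ itself is a homomorphism, the explicit bounding $3$-chain above is the single slightly fiddly computation, and every remaining step is formal.
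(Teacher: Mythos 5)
Your proposal is correct, and it reaches the theorem by a genuinely different route than the paper's. Both arguments share the same skeleton: use conditions (1) and (2) of Theorem \ref{theorem_Kuz'min1996} to make sense of $\varepsilon_a$, write each $g\in C_G(a)$ as $g=g'a^{n}$ with $n=\varepsilon_a(g)$ and $g'=ga^{-n}\in C_G(a)\cap\Ker\varepsilon_a$, show that $[a|g]-[g|a]$ and $[a|g']-[g'|a]$ represent the same class in $H_2G$, and then invoke Kuz\cprime min's surjectivity. The difference lies in how that invariance is established. The paper transports the computation to Hopf's formula: it quotes Brown's explicit isomorphism $\psi\colon {R\cap[F,F]}/[R,F]\to H_2G$ (Proposition \ref{proposition_Brown}), proves $\zeta=\psi\circ\zeta'$ where $\zeta'((g_a)_{a\in B})=\prod_{a\in B}[s(a),s(g_a)][R,F]$ (Proposition \ref{proposition_coincide}), and then chooses for each $a\in B$ a section satisfying $s_a(g'a^n)=s_a(g')s_a(a)^n$, so that the invariance becomes the free-group commutator identity $[s_a(a),s_a(g')s_a(a)^n]=[s_a(a),s_a(g')]$. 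You instead stay entirely inside the normalized bar complex: you show that $\phi_a\colon C_G(a)\to H_2G$, $g\mapsto[a|g]-[g|a]$, is a homomorphism by exhibiting the bounding $3$-chain $[a|g|h]+[g|h|a]-[g|a|h]$ --- whose boundary is indeed $([a|gh]-[gh|a])-([a|g]-[g|a])-([a|h]-[h|a])$ once the relations $ag=ga$ and $ah=ha$ are used; this computation checks out --- and you combine this with $\phi_a(a)=0$ to get $\phi_a(g'a^n)=\phi_a(g')$. Your route is more self-contained: it needs no presentation $G=F/R$, no section, and neither of the paper's two propositions. What the paper's route buys is Proposition \ref{proposition_coincide} itself, i.e.\ the commutator description of $\zeta$ under Hopf's formula and its independence of the choice of section, which has independent interest and turns the key invariance into a one-line identity. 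One small remark: your fallback justification that $\phi_a$ is a homomorphism ``because this is implicit in Theorem \ref{theorem_Kuz'min1996}'' leans only on Kuz\cprime min's statement calling $\zeta$ a homomorphism; the explicit bar-complex verification you also supply is the honest justification, and it is correct.
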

If $G$ is an irreducible Wirtinger group, then $B=\{a\}$ for some $a\in G$
and the homomorphism $\varepsilon_a\colon G\to\mathbb{Z}$
is nothing but the projection to the abelianization $G_{\mathrm{ab}}\cong\mathbb{Z}$.
As an immediate corollary of Theorem \ref{theorem_main},
we obtain the following result:
\begin{corollary}\label{corollary_irreducible case}
Let $G$ be an irreducible Wirtinger group.
Then there exists an element $a\in G$
satisfying
\begin{itemize}
\item[\textup{(i)}] $G$ coincides with the normal closure of $a$,
\item[\textup{(ii)}] the class of $a$ in $G_{\mathrm{ab}}$ generates $G_{\mathrm{ab}}$.
\end{itemize}
Furthermore,
the homomorphism
  \[
    \zeta \colon C_G(a)\cap[G,G] \to H_2G, \quad g \mapsto [a|g]-[g|a]
  \]
  is surjective.
\end{corollary}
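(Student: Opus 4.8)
The plan is to read the statement off Theorem \ref{theorem_main} in the special case where the indexing set $B$ is a singleton, supplying the two bookkeeping facts (i) and (ii) along the way.

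First I would exhibit the element $a$. Because $G$ is an irreducible Wirtinger group, Theorem \ref{theorem_Kuz'min1996} provides a subset $B\subset G$ meeting its three conditions, and irreducibility forces $B=\{a\}$ to be a singleton, as noted in the discussion preceding Corollary \ref{cor-kuzmin}. Condition (1) of that theorem is exactly (i), that $G$ is the normal closure of $a$. For (ii) I would pass to $G_{\mathrm{ab}}$: since the image in an abelian quotient of a normal closure is just the subgroup generated by the images of its generators, and the normal closure of $a$ is all of $G$, the class of $a$ generates $G_{\mathrm{ab}}$; as $G$ is irreducible we have $G_{\mathrm{ab}}\cong\mathbb{Z}$, so this class is a generator of an infinite cyclic group.

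It then remains to invoke Theorem \ref{theorem_main} for this $B=\{a\}$. As already observed in the paragraph preceding the corollary, the homomorphism $\varepsilon_a\colon G\to\mathbb{Z}$ sending $a\mapsto 1$ is nothing but the canonical projection onto $G_{\mathrm{ab}}\cong\mathbb{Z}$, so that $\Ker\varepsilon_a=[G,G]$ and hence $C_G(a)\cap\Ker\varepsilon_a=C_G(a)\cap[G,G]$. Substituting this identification into Theorem \ref{theorem_main} turns its conclusion into exactly the surjectivity of $\zeta\colon C_G(a)\cap[G,G]\to H_2G$ asserted here.

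I anticipate no real obstacle, since all the substantive content sits in Theorem \ref{theorem_main} and the corollary is a genuine specialization. The only steps requiring a moment's care are the identification $\Ker\varepsilon_a=[G,G]$ and the verification that a single $a$ simultaneously witnesses (i), (ii), and Kuz\cprime min's conditions, both of which follow at once from $G_{\mathrm{ab}}\cong\mathbb{Z}$.
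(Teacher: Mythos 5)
Your proposal is correct and follows exactly the paper's route: the paper derives this corollary as an immediate consequence of Theorem \ref{theorem_main} by taking $B=\{a\}$ (forced by irreducibility) and noting that $\varepsilon_a$ is then the projection to $G_{\mathrm{ab}}\cong\mathbb{Z}$, so $\Ker\varepsilon_a=[G,G]$. Your additional verifications of (i) and (ii) — (i) from Kuz\cprime min's condition (1) and (ii) from the fact that the image of a normal closure in an abelian quotient is generated by the image of $a$ — are exactly the bookkeeping the paper leaves implicit, and they are sound.
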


\section{The proof of Theorem \ref{theorem_main}}

\begin{proposition}[Brown 
\cite{Brown}*{\S II.5 Exercise 4}]\label{proposition_Brown}
 Let $G=F/R$ be a group where $F$ is a free group
 and $R$ is a normal subgroup of $F$.
Choose a set-theoretical section $s\colon G\to F$,
and define a homomorphism $\varphi\colon H_2G 
  \to {R\cap [F,F]}/[R,F]$ to be the one 
  induced by a homomorphism $C_2G \to R/[R,R], \, [g|h] \mapsto s(g)s(h)s(gh)^{-1}[R,R]$,
  where $C_{2}G$ is the group of normalized $2$-chains of $G$. 
In  addition, define a homomorphism in the opposite direction 
$\psi\colon {R\cap [F,F]}/[R,F]\to H_2G$ by 
  \[
    \prod_{i=1}^{n}[a_i,b_i][R,F] \mapsto \sum_{i=1}^{n}
    \{[I_{i-1}|\bar{a_i}]+[I_{i-1}\bar{a_i}|\bar{b_i}]
    -[I_{i-1}\bar{a_i}\bar{b_i}\bar{a_i}^{-1}|\bar{a_i}]-[I_i|\bar{b_i}]\},
  \]
  where $\bar{a_i}$ is the class of $a_i \in F$ in $G=F/R$ and
  $I_i\coloneqq [\bar{a_1},\bar{b_1}]\cdots[\bar{a_i},\bar{b_i}]$.
Then $\varphi$ is an isomorphism
whose inverse is $\psi$.
\end{proposition}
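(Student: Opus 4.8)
The plan is to work throughout with the normalized bar complex $C_\ast G$, whose differentials are $\partial_2[g|h]=[h]-[gh]+[g]$ and $\partial_3[g|h|k]=[h|k]-[gh|k]+[g|hk]-[g|h]$, so that $H_2G=\Ker\partial_2/\Img\partial_3$. I normalize the section by $s(1)=1$ and abbreviate $\rho(g,h):=s(g)s(h)s(gh)^{-1}$. First I would check that $[g|h]\mapsto\rho(g,h)[R,R]$ really defines a homomorphism $c\colon C_2G\to R/[R,R]$: each $\rho(g,h)$ lies in $R$ since it dies in $G$, and $\rho(1,h)=\rho(g,1)=1$, so $c$ respects normalization. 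The whole of the well-definedness of $\varphi$ rests on the identity $\rho(g,h)\rho(gh,k)={}^{s(g)}\rho(h,k)\cdot\rho(g,hk)$, obtained by associating $s(g)s(h)s(k)$ in two ways; modulo $[R,F]$ the conjugation ${}^{s(g)}(-)$ is trivial and the group is abelian, so this identity becomes exactly the vanishing of $c(\partial_3[g|h|k])$ in $R/[R,F]$. Hence $c$ kills $\Img\partial_3$ after reduction mod $[R,F]$. Moreover, for a cycle $z$ the image of $c(z)$ in $F/[F,F]$ coincides with the image of $\partial_2z$ under $[g]\mapsto s(g)[F,F]$, hence vanishes, so $c(z)$ lands in $R\cap[F,F]$; composing with $R/[R,R]\twoheadrightarrow(R\cap[F,F])/[R,F]$ yields $\varphi$.

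Next I would establish that $\varphi$ is an isomorphism. The efficient route is the five-term exact sequence of $1\to R\to F\to G\to1$, which, since $F$ is free and hence $H_2F=0$, reads
\[
0\to H_2G\to (R_{\mathrm{ab}})_G\to F_{\mathrm{ab}}\to G_{\mathrm{ab}}\to 0,
\]
with $(R_{\mathrm{ab}})_G=R/[R,F]$; its first map is the transgression, which on the bar complex is exactly the map induced by $c$. Thus $\varphi$ is injective with image the kernel of $R/[R,F]\to F/[F,F]$, namely $(R\cap[F,F])/[R,F]$, giving Hopf's isomorphism. I expect the identification of $\varphi$ with this transgression edge homomorphism to be the main obstacle: if one insists on a self-contained argument avoiding the spectral sequence, it is equivalent to proving by hand that every $2$-cycle $z$ with $c(z)\in[R,F]$ is a boundary, which is the genuine content of Hopf's formula.

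It then remains to verify that the explicit $\psi$ is the inverse, and here two clean computations suffice. Writing $I_i=[\bar{a_1},\bar{b_1}]\cdots[\bar{a_i},\bar{b_i}]$ and applying $\partial_2$ to the four terms of each summand, the $i$-th block contributes $[I_{i-1}]-[I_i]$, so the total boundary telescopes to $[I_0]-[I_n]=[1]-[I_n]$; since $w=\prod_i[a_i,b_i]\in R$ forces $I_n=1$ in $G$, this vanishes and the formula indeed produces a $2$-cycle. (This is precisely where membership in $R$, not merely in $[F,F]$, is used.) For $\varphi\circ\psi$ I would substitute $\rho$ into each block and cancel the interior section values: the $i$-th block collapses to $\rho(I_{i-1},\bar{a_i})\rho(I_{i-1}\bar{a_i},\bar{b_i})\rho(I_{i-1}\bar{a_i}\bar{b_i}\bar{a_i}^{-1},\bar{a_i})^{-1}\rho(I_i,\bar{b_i})^{-1}=s(I_{i-1})\,[s(\bar{a_i}),s(\bar{b_i})]\,s(I_i)^{-1}$, and the product over $i$ telescopes in $F$ to $\prod_i[s(\bar{a_i}),s(\bar{b_i})]$. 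Since the image of $R$ is central in $F/[R,F]$, one has $[s(\bar{a_i}),s(\bar{b_i})]\equiv[a_i,b_i]\pmod{[R,F]}$, so $\varphi(\psi(w))$ equals the class of $w$.

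Finally I would assemble the pieces. The computation $\varphi\circ\psi=\mathrm{id}$ shows that $\psi(w)$ is a $2$-cycle whose $\varphi$-image is the class of $w$; since $\varphi$ is injective, whenever two products of commutators represent the same coset in $(R\cap[F,F])/[R,F]$ their $\psi$-values agree in $H_2G$. Hence $\psi$ is a well-defined homomorphism, and because $\varphi$ is a bijection with $\varphi\circ\psi=\mathrm{id}$, it follows that $\psi=\varphi^{-1}$, as claimed.
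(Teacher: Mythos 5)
The paper itself contains no proof of this proposition: it is quoted from Brown \cite{Brown}*{\S II.5 Exercise 4} as a known result, so there is no internal argument to compare against, and your proposal must stand on its own. Most of it does. The normalization $s(1)=1$, the cocycle identity $\rho(g,h)\rho(gh,k)={}^{s(g)}\rho(h,k)\cdot\rho(g,hk)$ showing that $c$ kills $\Img\partial_3$ modulo $[R,F]$, the comparison with $\partial_2$ in $F_{\mathrm{ab}}$ showing that cycles land in $R\cap[F,F]$, the telescoping computations showing that the $\psi$-formula produces $2$-cycles and that $\varphi\circ\psi=\mathrm{id}$ on representatives, and the trick of deducing well-definedness of $\psi$ (and then $\psi=\varphi^{-1}$) from injectivity of $\varphi$ are all correct; this is the standard intended route for Brown's exercise.

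The genuine gap is the one you flag yourself: injectivity of $\varphi$ is never proved. You invoke the five-term exact sequence and assert that its map $H_2G\to R/[R,F]$ ``is exactly the map induced by $c$,'' but that identification is not a formal consequence of the existence of the sequence --- it is precisely the content of the exercise (equivalently, of Hopf's formula in its explicit form), and asserting it amounts to assuming what is to be proved. Nor can the gap be routed around with what you have established: the abstract isomorphism $H_2G\cong(R\cap[F,F])/[R,F]$ together with surjectivity of $\varphi$ does not yield injectivity, because a surjective homomorphism between abstractly isomorphic abelian groups (not necessarily finitely generated here) need not be injective; and without injectivity your argument for the well-definedness of $\psi$ collapses as well, since it quotes injectivity to see that two commutator decompositions of the same coset have homologous $\psi$-images. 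To close the proof you must do one of two substantive things: (i) derive the five-term sequence at chain level --- e.g.\ from the relation-module sequence $0\to R/[R,R]\to\mathbb{Z}G\otimes_{\mathbb{Z}F}I_F\to I_G\to 0$ and the dimension shift $H_2G\cong H_1(G,I_G)$ --- and trace through it that the connecting map is induced by $[g|h]\mapsto\rho(g,h)$; or (ii) prove directly that every normalized $2$-cycle $z$ with $c(z)\in[R,F]$ is a boundary. Either computation is the heart of the statement, and the proposal stops exactly where it begins.
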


\begin{proposition}\label{proposition_coincide}
  Under the same notation as Proposition \ref{proposition_Brown},
  let 
 \[
  \zeta'\colon\bigoplus_{a \in B}C_G(a) \to {R\cap [F,F]}/[R,F]
  \]
   be the homomorphism defined by setting
  \[
    \zeta'((g_a)_{a\in B}) = \prod_{a\in B}[s(a), s(g_a)][R,F].
  \]
  Then the composition of $\zeta'$ with the isomorphism $\psi$ in Proposition \ref{proposition_Brown} coincides with the homomorphism
\[
\zeta \colon \bigoplus_{a \in B}C_G(a) \to H_2G, \quad
    (g_a)_{a\in B} \mapsto \sum_{a \in B}[a|g_a]-[g_a|a].
\]
  In particular, 
  the homomorphism $\zeta'$ does not depend on the choice of section $s$.
\end{proposition}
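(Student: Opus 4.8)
The plan is to apply the explicit formula for $\psi$ from Proposition \ref{proposition_Brown} directly to the coset $\zeta'\bigl((g_a)_{a\in B}\bigr)$. First I would check that $\zeta'$ is well defined, i.e.\ that each factor $[s(a),s(g_a)]$ lies in $R\cap[F,F]$: it is a commutator, hence in $[F,F]$, and since $g_a\in C_G(a)$ its image $[a,g_a]$ is trivial in $G$, so $[s(a),s(g_a)]\in R$. As $R\cap[F,F]$ is a subgroup, the product $\prod_{a\in B}[s(a),s(g_a)]$ — a finite product, since an element of the external direct sum has $g_a$ equal to the identity for all but finitely many $a$ — again lies in $R\cap[F,F]$, so $\zeta'$ indeed takes values in $(R\cap[F,F])/[R,F]$.

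Next I would substitute into the formula for $\psi$, taking $a_i=s(a)$ and $b_i=s(g_a)$, so that $\bar a_i=a$ and $\bar b_i=g_a$. The decisive observation is that the centralizer hypothesis makes every partial product collapse: since each bracket $[a,g_a]$ is trivial in $G$, we have $I_i=1$ for all $i$ (in particular $I_0=1$), and moreover $a\,g_a\,a^{-1}=g_a$ in $G$. The four-term summand indexed by $a$ therefore reduces to
\[
  [1|a]+[a|g_a]-[g_a|a]-[1|g_a].
\]
Because we work with normalized chains, the degenerate terms $[1|a]$ and $[1|g_a]$ vanish, leaving $[a|g_a]-[g_a|a]$. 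Summing over $a\in B$ gives exactly $\sum_{a\in B}[a|g_a]-[g_a|a]=\zeta\bigl((g_a)_{a\in B}\bigr)$, which is the asserted identity $\psi\circ\zeta'=\zeta$. One should note in passing that the indices $a$ with $g_a=1$ contribute trivially to both sides, so the possibility $s(1_G)\neq 1_F$ causes no difficulty.

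For the final ``in particular'' claim, I would observe that the formula defining $\psi$ involves only elements of $G$ and never the section $s$, so $\psi$ is independent of $s$. Since $\zeta$ is manifestly independent of $s$ and $\psi$ is an isomorphism, the relation $\zeta'=\psi^{-1}\circ\zeta$ (valid because $\psi$ is injective) shows that $\zeta'$ does not depend on the choice of $s$; the same relation also exhibits $\zeta'$ as a composite of homomorphisms, confirming that it is one.

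The computation is routine once the formula of Proposition \ref{proposition_Brown} is in hand, so there is no serious obstacle. The only real care needed is the bookkeeping in the four-term expression and the recognition that the centralizer condition is precisely what trivializes all the partial products $I_i$ and the conjugate $a\,g_a\,a^{-1}$; I expect this collapsing step, together with confirming that the commutator convention matches the one implicit in Brown's formula, to be the point that most needs getting right.
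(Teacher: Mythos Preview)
Your proposal is correct and follows essentially the same route as the paper: apply the explicit formula for $\psi$ to $[s(a),s(g_a)]$, use $g_a\in C_G(a)$ to simplify $ag_aa^{-1}$ and the partial products $I_i$ to the identity, and drop the degenerate normalized terms. The paper's proof is just the terse version of your computation (it reduces to a single index $a$ first rather than carrying the whole product), and it omits the well-definedness check and the argument for the ``in particular'' clause that you spell out.
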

\begin{proof}
  It is enough to show that for each $a$ and $g_a\in C_G(a)$, the equation $\psi\circ\zeta'(g_a)=\zeta(g_a)$ holds. 
  It follows from Proposition \ref{proposition_Brown} that
  \begin{align*}
    \psi\circ\zeta'(g_a)&=\psi([s(a),s(g_a)][R,F]) \\
    &=[1|a]+[a|g_a]-[ag_a a^{-1}|a]-[[a,g_a]|g_a] \\
    &=[a|g_a]-[g_a|a] \\
    &=\zeta(g_a).
  \end{align*}
\end{proof}

\begin{proof}[Proof of Theorem \ref{theorem_main}]
Let $a$ be an arbitrary element of $B$.
Since the right coset 
$\Ker\varepsilon_a\cdot a$ is a generator of an infinite cyclic group 
$\Ker\varepsilon_a\backslash G$, 
for any element $g_a$ of $G$ 
there is a unique pair $(n,g'_a)\in \mathbb{Z}\times\Ker\varepsilon_a$ such that $g_a=g'_a a^n$. 
We take a set-theoretical section $s_a\colon G\to F$ for each $a\in B$ such that 
\[
  s_a(g_a) = s_a(g_a') s_a(a)^n.
\]
A simple calculation yields $[s_a(a),s_a(g_a)]=[s_a(a),s_a(g'_a)]$
and hence $[a,g_a]=[a,g'_a]$, 
from which it follows
that $g_a\in C_G(a)$ if and only if $g'_a\in C_G(a)\cap\Ker\varepsilon_a$.
Therefore, for any $(g_a)_{a\in B} \in \bigoplus_{a\in B} C_G(a)$,
 there is $(g_a')_{a\in B} \in \bigoplus_{a\in B} C_G(a) 
\cap\Ker{\varepsilon_a}$ which satisfies
\[\zeta'((g_a)_{a\in B}) = \zeta'((g_a')_{a\in B}). \]
Finally, the composition with $\psi$ gives rise to $\zeta((g_a)_{a\in B})=\zeta((g'_a)_{a\in B})$. 
This completes the proof of Theorem \ref{theorem_main}.
\end{proof}%

\section{Examples}
Let $G$ be an irreducible Wirtinger group and 
$a\in G$ is an element as in Corollary 
\ref{cor-kuzmin} and \ref{corollary_irreducible case}.
In general, $C_G(a)\cap[G,G]$ 
is a proper subgroup of $C_G(a)$, and therefore, 
Theorem \ref{theorem_main} 
(\emph{resp.} Corollary \ref{corollary_irreducible case})
actually improves upon Theorem \ref{theorem_Kuz'min1996}
(\emph{resp.} Corollary \ref{cor-kuzmin}).
In this section,
we give examples of $G$ and $a\in G$ satisfying
$C_G(a)\cap[G,G]$ is a %
proper subgroup of $C_G(a)$.
\begin{eg}\label{eg-knot}
Let $K\subset S^3$ be a non-trivial tame knot and 
$G_{K}\coloneqq\pi_1(S^3\setminus K)$ be the knot group of $K$.
It is well-known that $G_{K}$ is an irreducible Wirtinger group and that $H_{2}G_{K}=0$.
Let $(\mu,\lambda)$ be a pair of a  meridian $\mu$ 
and a preferred longitude $\lambda$ in $G_{K}$.
Then the meridian $\mu\in G_{K}$ satisfies %
the conditions (i)(ii) Corollary \ref{corollary_irreducible case}. Namely,
\begin{enumerate}
\item $G_{K}$ coincides with the normal closure of $\mu$,
\item the class of $\mu$ in $(G_{K})_{\mathrm{ab}}\cong\mathbb{Z}$ 
generates $(G_{K})_{\mathrm{ab}}$.
\end{enumerate}
Now suppose that $K$ is a prime knot.
Then the annulus theorem (Cannon-Feustel \cite{MR0391094}) implies
the isomorphisms
\[
C_{G_{K}}(\mu)=\langle \mu,\lambda
\rangle\cong\mathbb{Z}\oplus\mathbb{Z},\quad
C_{G_{K}}(\mu)\cap [G_{K},G_{K}]=\langle \lambda\rangle\cong \mathbb{Z},\]
as explained in Eisermann \cite{MR1954330}*{\S4.3}.
\end{eg}

\begin{eg}\label{eg-braid}
Let $B_n$ $(n\geq 4)$ be the braid group on $n$ strands,
and let $\sigma_i$ $(1\leq i\leq n-1)$ be the standard
generaters of $B_n$. Namely,
\[
B_n=\langle \sigma_1,\sigma_2,\dots,\sigma_{n-1}\mid
\sigma_i\sigma_j\sigma_i=\sigma_j\sigma_i\sigma_j\
(|i-j|=1),\ \sigma_i\sigma_j=\sigma_j\sigma_i\
(|i-j|\geq 2)\rangle.
\]
It is known that $B_n$ is an irreducible Wirtinger group
(see Gilbert \cite{MR3627395}*{\S3}), and that
$H_2B_n\cong \mathbb{Z}/2\mathbb{Z}$.
The generator $\sigma_1\in B_n$ satisfies %
the conditions (i)(ii) Corollary \ref{corollary_irreducible case}. 
It follows easily from the presentation of $B_n$ that 
\[
C_{B_n}(\sigma_1)\supset 
\langle\sigma_{1}\rangle\times\langle \sigma_3,\sigma_4,
\dots,\sigma_{n-1}\rangle\cong \mathbb{Z}\times B_{n-2},
\]
and that
\[
\sigma_i\not\in [B_n,B_n]\quad (1\leq i\leq n-1),
\]
where the latter follows from the fact
that each $\sigma_i$ generates the abelianization
$(B_n)_{\mathrm{ab}}\cong\mathbb{Z}$ of $B_n$.
We have verified that $C_{B_n}(\sigma_1)\cap [B_n,B_n]$
is indeed a proper subgroup of $C_{B_n}(\sigma_1)$.
Since the homomorphism 
$\zeta\colon C_{B_n}(\sigma_1)\cap[B_n,B_n]\to H_2B_n$
is surjective and $H_2B_n\cong \mathbb{Z}/2\mathbb{Z}$,
the subgroup $C_{B_n}(\sigma_1)\cap[B_n,B_n]$ is non-trivial.
\end{eg}

\begin{eg}\label{eg-takase}
  Let $G$ be an irreducible Wirtinger group defined by generators $g,x,y,z$ and relations
  \begin{align*}
    &g^{-1}xg=y,\, g^{-1}yg=z,\, g^{-1}zg=x, \\
    &y^{-1}gy=x,\, y^{-1}xy=z,\, y^{-1}zy=g,
  \end{align*}
which was introduced in  Kuz\cprime min \cite{Kuz'min1996}*{Example 2}.
The group $G$ is a semi-direct product of the quaternion group $Q$ %
and the infinite cyclic group generated by $g$.
Kuz\cprime min showed that $[G,G]=Q$ and $H_{2}G=0$ \cite{Kuz'min1996}*{Example 2}.
The generator $g\in G$ satisfies 
the conditions (i)(ii) Corollary \ref{corollary_irreducible case}.
Observe that the order of $C_{G}(g)$ is infinite because 
$C_{G}(g)$ contains the infinite cyclic group generated by $g$,
while the order of $C_G(g)\cap [G,G]$ is finite
because $[G,G]=Q$ is a finite group.
We conclude that 
$C_G(g)\cap [G,G]$ is a finite proper subgroup of $C_G(g)$.

\end{eg}

\section*{Acknowledgements}
The authors thank the anonymous referee for valuable comments that improved the
quality of the manuscript.
The first author was partially supported by JSPS KAKENHI Grant Number
20K03600 and by the Research Institute for Mathematical Sciences, a Joint
Usage/Research Center located in Kyoto University.

\begin{bibdiv}
\begin{biblist}

\bib{Brown}{book}{
   author={Brown, Kenneth S.},
   title={Cohomology of groups},
   series={Graduate Texts in Mathematics},
   volume={87},
   publisher={Springer-Verlag, New York-Berlin},
   date={1982},
   pages={x+306},
   isbn={0-387-90688-6},
   review={\MR{672956}},
}

\bib{MR0391094}{article}{
   author={Cannon, James W.},
   author={Feustel, C. D.},
   title={Essential embeddings of annuli and M\"{o}bius bands in
   $3$-manifolds},
   journal={Trans. Amer. Math. Soc.},
   volume={215},
   date={1976},
   pages={219--239},
   issn={0002-9947},
   review={\MR{0391094}},
   doi={10.2307/1999723},
}

\bib{MR1954330}{article}{
   author={Eisermann, Michael},
   title={Homological characterization of the unknot},
   journal={J. Pure Appl. Algebra},
   volume={177},
   date={2003},
   number={2},
   pages={131--157},
   issn={0022-4049},
   review={\MR{1954330}},
   doi={10.1016/S0022-4049(02)00068-3},
}

\bib{MR3627395}{article}{
   author={Gilbert, N. D.},
   title={Labelled oriented graph groups and crossed modules},
   journal={Arch. Math. (Basel)},
   volume={108},
   date={2017},
   number={4},
   pages={365--371},
   issn={0003-889X},
   review={\MR{3627395}},
   doi={10.1007/s00013-016-1013-0},
}

\bib{Howie}{article}{
   author={Howie, James},
   title={On the asphericity of ribbon disc complements},
   journal={Trans. Amer. Math. Soc.},
   volume={289},
   date={1985},
   number={1},
   pages={281--302},
   issn={0002-9947},
   review={\MR{0779064}},
   doi={10.2307/1999700},
}

\bib{Kulikov}{article}{
   author={Kulikov, V. S.},
   title={Alexander polynomials of plane algebraic curve},
   language={Russian, with Russian summary},
   journal={Izv. Ross. Akad. Nauk Ser. Mat.},
   volume={57},
   date={1993},
   number={1},
   pages={76--101},
   issn={1607-0046},
   translation={
      journal={Russian Acad. Sci. Izv. Math.},
      volume={42},
      date={1994},
      number={1},
      pages={67--89},
      issn={1064-5632},
   },
   review={\MR{1220582}},
   doi={10.1070/IM1994v042n01ABEH001534},
} 

\bib{MR1307063}{article}{
   author={Kulikov, V. S.},
   title={Geometric realization of $C$-groups},
   language={Russian, with Russian summary},
   journal={Izv. Ross. Akad. Nauk Ser. Mat.},
   volume={58},
   date={1994},
   number={4},
   pages={194--203},
   issn={1607-0046},
   translation={
      journal={Russian Acad. Sci. Izv. Math.},
      volume={45},
      date={1995},
      number={1},
      pages={197--206},
      issn={1064-5632},
   },
   review={\MR{1307063}},
   doi={10.1070/IM1995v045n01ABEH001627},
}

\bib{Kuz'min1996}{article}{
   author={Kuz\cprime min, Yu. V.},
   title={Groups of knotted compact surfaces, and central extensions},
   language={Russian, with Russian summary},
   journal={Mat. Sb.},
   volume={187},
   date={1996},
   number={2},
   pages={81--102},
   issn={0368-8666},
   translation={
      journal={Sb. Math.},
      volume={187},
      date={1996},
      number={2},
      pages={237--257},
      issn={1064-5616},
   },
   review={\MR{1392843}},
   doi={10.1070/SM1996v187n02ABEH000110},
}

\bib{MR3729413}{book}{
   author={Nosaka, Takefumi},
   title={Quandles and topological pairs},
   series={SpringerBriefs in Mathematics},
   note={Symmetry, knots, and cohomology},
   publisher={Springer, Singapore},
   date={2017},
   pages={ix+136},
   isbn={978-981-10-6792-1},
   isbn={978-981-10-6793-8},
   review={\MR{3729413}},
   doi={10.1007/978-981-10-6793-8},
}

\bib{Simon}{article}{
   author={Simon, Jonathan},
   title={Wirtinger approximations and the knot groups of $F^{n}$ in
   $S^{n+2}$},
   journal={Pacific J. Math.},
   volume={90},
   date={1980},
   number={1},
   pages={177--190},
   issn={0030-8730},
   review={\MR{599329}},
}

\bib{Yajima}{article}{
   author={Yajima, Takeshi},
   title={Wirtinger presentations of knot groups},
   journal={Proc. Japan Acad.},
   volume={46},
   date={1970},
   number={10},
   pages={suppl. to 46 (1970), no. 9, 997--1000},
   issn={0021-4280},
   review={\MR{0281781}},
}

\end{biblist}
\end{bibdiv}

\vspace{10mm}
\noindent
\begin{tabular}{l}
Toshiyuki {\sc Akita}\\
Department of Mathematics\\
Hokkaido University\\
Sapporo 060-0810\\
Japan\\
E-mail: akita@math.sci.hokudai.ac.jp
\end{tabular}
\bigskip

\noindent
\begin{tabular}{l}
Sota {\sc Takase}\\
Department of Mathematics\\
Hokkaido University\\
Sapporo 060-0810\\
Japan\\
E-mail: takase.sota.c9@elms.hokudai.ac.jp
\end{tabular}
\bigskip

\label{kjmlast}
\end{document}